\newcommand  \ind[1]  {   {1\hspace{-1.2mm}{\rm I}}_{\{#1\} }    }
\newcommand {\wt}[1] {{\widetilde #1}}
\newcommand{\commentout}[1]{}
\newcommand{\R}{\mathbb{R}}
\newcommand {\eps}  {\varepsilon}
\newcommand {\sg} {\sigma}
\newcommand {\vp} {\varphi}
\newcommand {\lb} {\lambda}
\newcommand {\cam} { {\mathcal M} }
\newcommand {\f}   {\frac}
\newcommand {\p}   {\partial}
\newcommand{\red}{\textcolor{red}}
\newcommand{\beq}{\begin{equation}}
\newcommand{\eeq}{\end{equation}}
\newtheorem{theorem}{Theorem}
\newtheorem{lemma}[theorem]{Lemma}
\newtheorem{remark}[theorem]{Remark}
\newcommand{\qed}{{ \hfill
                       {\unskip\kern 6pt\penalty 500 \raise -2pt\hbox{\vrule\vbox to 6pt{\hrule width 6pt
                       \vfill\hrule}\vrule} \par}   }}
\title{Collective motion driven by nutrient consumption}
\author{Pierre-Emmanuel Jabin\thanks{P.--E. Jabin.  Department of Mathematics and Huck Institutes, Pennsylvania State University, State College, PA 16801, USA. Email: pejabin@psu.edu} \and
Beno\^ \i t Perthame\thanks{Sorbonne Universit{\'e}, CNRS, Universit\'{e} de Paris, Inria, Laboratoire Jacques-Louis Lions UMR7598, F-75005 Paris. 
Email : Benoit.Perthame@sorbonne-universite.fr}
\thanks{B.P. has received funding from the European Research Council (ERC) under the European Union's Horizon 2020 research and innovation programme (grant agreement No 740623). P.E.J.  is partially supported by NSF DMS Grants DMS-2049020, DMS-2205694, and DMS-2219397.}
}
\date{\today}
\begin{document}
\maketitle
\pagestyle{plain}
\pagenumbering{arabic}

\begin{abstract} 
A classical problem describing the collective motion of cells, is the movement driven by consumption/depletion of a nutrient. Here we analyze one of the simplest such model written as a coupled Partial Differential Equation/Ordinary Differential Equation system which we scale so as to get a limit describing the usually observed pattern. In this limit the cell density is concentrated as a moving Dirac mass and the nutrient undergoes a discontinuity. 

We first carry out the analysis without diffusion, getting a complete description of the unique  limit. When diffusion is included, we prove several specific a priori estimates and interpret the system as a heterogeneous monostable equation. This allow us to obtain a limiting problem which shows the concentration effect of the limiting dynamics. 
\end{abstract} 
\vskip .7cm

\noindent{\makebox[1in]\hrulefill}\newline
2010 \textit{Mathematics Subject Classification.}   Primary : 35B25. Secondary: 35B36, 35D40, 35K57, 35B25, 35Q92,  92C17.
\newline\textit{Keywords and phrases.} Asymptotic analysis; Pattern formation; Reaction-diffusion equations.
%
\section{Introduction}
\label{sec:intro}

A classical problem describing the collective motion of cells, is the movement driven by consumption/depletion of a nutrient~\cite{BenJacob, Mimura, Murray2}.
The simplest description uses a number density of cells $u_\eps$ and a nutrient concentration $v_\eps$. It is written
\beq \begin{cases}
\p_t u_\eps - \eps \p^2_{xx} u_\eps = \f 1 \eps u_\eps (v_\eps- \mu),  \qquad t \geq 0, \; x \in \R,
\\
\p_t v_\eps = - u_\eps v_\eps,
\end{cases}
\label{eq:full}
\eeq
completed with initial data $u_\eps^0 $, $v_\eps^0 $, such that 
\[
\eps u_\eps^0 \in L^1_+(\R) , \qquad  0 < v_m \leq v_\eps^0  \leq v_M < \infty, \qquad   v_m < \mu < v_M.
\]
We have introduced a parameter $\eps$ which measures the time scale of the cell random motion compared to nutrient consumption. Our interest here is when this parameter is small because it is a case when a pattern is produced under the form of a high concentration of cells despite the parabolic character of Equation~\eqref{eq:full}. In fact this phenomena is closely related to concentration effects in non-local semi-linear parabolic equations as studied intensively recently, see \cite{Dieketal2005, BarPer2008,CJ2011,LorMirPer2011} and the references therein. This analogy leads us to postulate that $u_\eps$ concentrate as Dirac masses at points where $v_\eps$ undergoes a discontinuity. 
\\
The scale proposed here, which is chosen to produce a distinguished limit, is usual for semi-linear diffusion equations and has been studied for local problems in classical works, \cite{FSoug86,BES90}. The most efficient method is to use the Hopf-Cole transform and viscosity solutions of Hamilton-Jacobi equations \cite{CL_viscosity,CEL_hj}. We restrict our analysis to one dimension to explain the solution structure as depicted in Figure~\ref{fig:nutrientbact} but significant parts of our analysis can be extended to several dimensions.

Several related studies can be mentioned. Coupling Ordinary and  Partial Differential Equation is rather classical in different areas: for pattern formation, see \cite{marciniak20} (study of existence and stability of stationary solutions), modeling of two species dynamics with an unmotile specie \cite{chauviere12,SW21} for instance.  Traveling waves with a non-motile phase have also been studied, see~\cite{ZhangZhao07} and the references therein. However we are not aware of any analytical study related to the scaling proposed here.

\begin{figure}[!h]
    \centering
    \includegraphics[width=.4\textwidth]{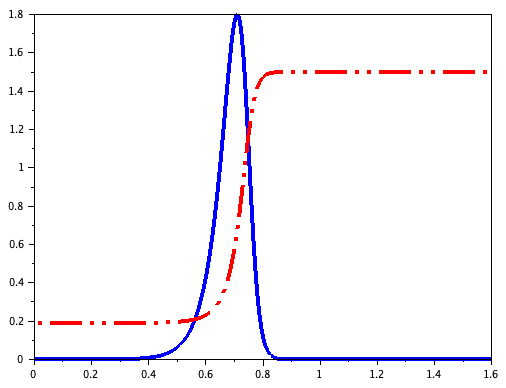}
    \\[-5pt]
   \caption{ Traveling wave solution of Equation~\eqref{eq:full}. 
   In blue/solid line the component $u_\eps$. In red/dashed line, the nutrient $v_\eps$.}
   \label{fig:nutrientbact}
\end{figure}

Our approach combines a reformulation of the problem which leads to a  heterogeneous monostable equation for $v_\eps$ and the standard Hopf-Cole transform  as mentioned above. This a convenient tool to represent the Dirac concentration of $u_\eps$ under the form $\exp (- \vp(t,x)/\eps )$ where $\vp$ behaves like a quadratic function.
\\

In Section \ref{sec:nodiff}, we begin with a simpler case where we omit the diffusion on $u_\eps$, arriving to a system of ordinary differential equations which can be solved nearly explicitly. This allows us to introduce the tools which are used in Section~\ref{sec:full} where we state and prove the concentration effect. Several related questions are detailed in appendices: the particular case of traveling waves, and some Sobolev regularity results.

\section{The problem without diffusion}
\label{sec:nodiff}

We begin with the simpler case where diffusion is ignored and where we can give a complete description while introducing the main tools for the general problem. We are reduced to a system of two differential equations with a parameter $x$, which solutions however behave as a front propagation in space, namely
\beq \begin{cases}
\p_t u_\eps (t,x) = \f 1 \eps u_\eps (v_\eps- \mu),  \qquad t \geq 0, \; x \in \R,
\\
\p_t v_\eps = - u_\eps v_\eps.
\end{cases}
\label{eq:1D}
\eeq

We define 
\[
Q(v)= v- \mu \ln v .
\]
We assume there are constants $Q_m$ and $Q_M $ such that 
\beq \begin{cases}
\text{For } x<0, \quad v_\eps^0(x) < \mu, \qquad \text{and for } x>0 \quad v_\eps^0(x) > \mu,
\\
 Q(v^0_\eps(x)) + \eps u^0_\eps \leq Q_M, \qquad Q(v^0_\eps(x))  > Q_m >Q(\mu),
\\
v_\eps^0 \to v^0 \quad \text{pointwise for } x \neq 0.
\end{cases} 
\label{nd:as1}
\eeq
In other words $v^0_\eps$ has  an initial  increasing discontinuity at $x=0$. For the cell density, we assume that  in the weak sense of measures,  as $\eps \to 0$, 
\beq \begin{cases}
u_\eps^0= \exp(\frac{\vp_\eps^0}{\eps}) \rightharpoonup \rho^0  \delta(x) \quad \text{in the weak sense of measures} , 
\\
\vp_\eps^0 \to \vp^0 \quad \text{in } C(\R), \qquad   \vp^0 <0 \text{ for } x \neq 0. 
\end{cases} 
\label{nd:as2}
\eeq

In this framework, we can describe the behavior of solutions as follows
\begin{theorem} Assume \eqref{nd:as1}-- \eqref{nd:as2}. The solution of \eqref{eq:1D} has limits $u_\eps  \rightharpoonup u$ (weak measures) and $v_\eps  \to v$ (strongly in $L^p_{loc}$) and, for $x>0$ there is a time $\tau(x)$ and $v^0_-(x)< \mu < v^0(x)$ such that
\\[2pt]
(i) $\; v(t,x) = v^0_-(x)$ for $t <\tau(x)$, \qquad $v(t,x) = v^0(x)$ for $t >\tau(x)$, 
\\[1pt]
(ii) $u(t,x) =  [\ln v^0(x) - \ln v^0_-(x)] \delta(t-\tau(x))$,
\\[1pt]
(iii)   $\tau(x)=-\frac{\vp^0(x)}{v^0(x)-\mu}$  with $\vp$ defined by \eqref{nd:eqlim}.
\\

For $x<0$, we have $v(t,x)= v^0(x)$. 
\label{th:nodiff}
\end{theorem}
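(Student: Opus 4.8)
The plan is to exploit that, for each fixed $x$, system \eqref{eq:1D} is a planar ODE with an explicit first integral, and to read off the entire dynamics from it. First I would note positivity ($u_\eps,v_\eps>0$ for all $t$) and monotonicity $\p_t v_\eps=-u_\eps v_\eps\le 0$. The key algebraic fact is that
\[
\p_t\big(Q(v_\eps)+\eps u_\eps\big)=Q'(v_\eps)\,\p_t v_\eps+\eps\,\p_t u_\eps=-\,u_\eps(v_\eps-\mu)+u_\eps(v_\eps-\mu)=0,
\]
so $Q(v_\eps(t,x))+\eps u_\eps(t,x)\equiv Q(v_\eps^0(x))+\eps u_\eps^0(x)=:H_\eps(x)$ for all $t$. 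Since $Q$ attains its global minimum $Q(\mu)$ at $v=\mu$, assumption \eqref{nd:as1} yields $Q(\mu)\le Q(v_\eps)\le H_\eps\le Q_M$, hence a uniform two–sided bound $0<v_m'\le v_\eps\le v_M'$ together with $\eps u_\eps\le Q_M-Q(\mu)$. Monotonicity plus these bounds give, via Helly/dominated convergence, a decreasing limit $v_\eps\to v$ strongly in $L^p_{loc}$, while $\int_0^T u_\eps\,dt=\ln v_\eps^0(x)-\ln v_\eps(T,x)$ is uniformly bounded, providing weak-$*$ compactness of $u_\eps\,dt$ as measures and the extraction $u_\eps\rightharpoonup u$.

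Next I would introduce the Hopf--Cole unknown $\vp_\eps:=\eps\ln u_\eps$, for which the first equation of \eqref{eq:1D} becomes $\p_t\vp_\eps=v_\eps-\mu$, i.e.\ $\vp_\eps(t,x)=\vp_\eps^0(x)+\int_0^t(v_\eps(s,x)-\mu)\,ds$; passing to the limit gives the limiting relation \eqref{nd:eqlim}, namely $\p_t\vp=v-\mu$ with $\vp(0,\cdot)=\vp^0$. The mechanism is then transparent. Fix $x>0$, so $v^0(x)>\mu$. As long as $\vp_\eps$ stays below a fixed negative level $-\da$, one has $u_\eps\le e^{-\da/\eps}\to 0$, whence $|\p_t v_\eps|=u_\eps v_\eps\to 0$ uniformly and $v_\eps(t,x)\to v^0(x)$; consequently $\p_t\vp_\eps\to v^0(x)-\mu>0$ and $\vp_\eps$ grows linearly, reaching $0$ at the arrival time $\tau(x)=-\vp^0(x)/(v^0(x)-\mu)$, which is exactly (iii). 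For $x<0$, instead $v^0(x)<\mu$, so $\p_t\vp_\eps<0$ throughout, $\vp_\eps$ never reaches $0$, $u_\eps$ stays exponentially small, $v_\eps$ does not move, and $v(t,x)=v^0(x)$.

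The concentration at $t=\tau(x)$ is identified through the first integral. Up to the front $v_\eps\approx v^0(x)$, so $H_\eps(x)\to Q(v^0(x))$; letting $t\to\infty$, monotonicity of $v_\eps$ forces $u_\eps\to 0$ and hence $Q(v_\eps^\infty(x))\to Q(v^0(x))$. Since $\p_t u_\eps=0$ exactly where $v_\eps=\mu$, the spike occurs as $v_\eps$ crosses $\mu$, so the final state is the \emph{conjugate} root $v^0_-(x)<\mu$ of $Q(\cdot)=Q(v^0(x))$, giving $v^0_-(x)<\mu<v^0(x)$ and the two plateaus of (i) (the value $v^0(x)$ before the front, dropping to $v^0_-(x)$ after it). The Dirac weight follows from $\int u_\eps\,dt=\ln v_\eps^0(x)-\ln v_\eps^\infty(x)\to\ln v^0(x)-\ln v^0_-(x)$, and this mass is carried at the single instant $\tau(x)$ because for $t>\tau(x)$ one has $v_\eps-\mu<0$, so $\vp_\eps$ decreases again and $u_\eps$ returns to being exponentially small: no second concentration occurs. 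This yields (ii).

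The step I expect to be the main obstacle is making the fast transition rigorous and matching it to the outer behaviour. Near $t=\tau(x)$ one has $u_\eps=O(1/\eps)$ over a window of width $O(\eps)$, so I would rescale $t=\tau(x)+\eps s$ and set $w_\eps=\eps u_\eps$, obtaining the $O(1)$ inner system $\p_s w_\eps=w_\eps(v_\eps-\mu)$, $\p_s v_\eps=-w_\eps v_\eps$ with the same invariant $Q(v_\eps)+w_\eps=H_\eps$. The inner orbit is then the full heteroclinic connection along the level set $Q(v)+w=Q(v^0(x))$ from $(w,v)=(0,v^0(x))$ at $s\to-\infty$ to $(0,v^0_-(x))$ at $s\to+\infty$, with $\int_{-\infty}^{\infty}w_\eps\,ds=\ln v^0(x)-\ln v^0_-(x)$, confirming both the jump and the Dirac mass. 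The delicate points are controlling the error in ``$v_\eps\approx v^0$'' just before $\tau(x)$, and obtaining enough uniformity in $x$ (e.g.\ near $x=0^+$, where $\tau(x)\to 0$) to upgrade the pointwise-in-$x$ ODE limits to the weak-measure convergence of $u_\eps$ and the strong $L^p_{loc}$ convergence of $v_\eps$ in the full $(t,x)$ variables.
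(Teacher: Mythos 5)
Your proposal is correct and follows essentially the same route as the paper: the conserved quantity $Q(v_\eps)+\eps u_\eps$, the Hopf--Cole variable $\vp_\eps=\eps\ln u_\eps$ with $\p_t\vp_\eps=v_\eps-\mu$, the identity $\int_0^T u_\eps\,dt=\ln v_\eps^0-\ln v_\eps(T)$, and the $x$-by-$x$ ODE analysis are exactly the paper's ingredients, and your formula for $\tau(x)$ is (iii). The one methodological difference is at the jump: the paper passes to the limit in the equations first and reads the concentration off the limiting system \eqref{nd:eqlim} (the constraint $Q(v)=Q(v^0)$ forces $v$ onto one of two branches, $\mathrm{supp}(m)\subset\{\vp=0\}$, and $\vp$ is piecewise linear so it vanishes only at $t=\tau(x)$), whereas you propose a matched inner/outer analysis via the rescaled heteroclinic orbit along the level set of $Q$; the paper's order of operations sidesteps the uniformity-in-$x$ and error-matching issues you flag, and the identification of the Dirac weight comes simply from $u=-\p_t\ln v$ in the distributional limit. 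A final remark: your plateau ordering ($v=v^0(x)$ for $t<\tau(x)$, dropping to $v^0_-(x)$ for $t>\tau(x)$) is the one the paper's proof actually establishes, consistent with $\p_t v_\eps\leq 0$; the displayed statement (i) interchanges the two time ranges, which appears to be a typo.
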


\begin{proof}
A  remarkable property of the system \eqref{eq:1D} is the identity
\[
\p_t [ \eps u_\eps + v_\eps - \mu \ln v_\eps]= 0, 
\]
which implies 
\beq
\eps u_\eps + Q( v_\eps ) = Q(v_\eps^0(x)) + \eps u^0_\eps \leq Q_M,  \quad \forall x \in \R.
\label{nd:idQ}
\eeq
Using this inequality and the fact that $v_\eps $ decreases, we first conclude for $x<0$, $v_\eps- \mu <0$ and thus $u_\eps(t,x) \to 0$ as $\eps \to 0$, therefore $v_\eps(t,x) \to v^0(x)$   
\\

For $x>0$, we conclude from \eqref{nd:idQ} and assumption~\eqref{nd:as1},  that there are constants such that
\beq
v_m \leq  v_\eps (t,x) \leq v_M.
\eeq
  
 Also, integrating  the second equation of \eqref {eq:1D}, we have for all $T>0$, 
 \[
 \int_0^T u_\eps (t,x) dt  = \ln v_\eps^0(x) - \ln v_\eps(T,x) \leq C(T),
 \] 
 and since we expect that $u_\eps$ is a concentrated measure, we define $\vp_\eps = \eps \ln u_\eps$. It satisfies
 \beq \label{nd:phieps}
 \p_t \vp_\eps  =v_\eps -\mu  \quad \text{ is bounded in $t$ and $x$}.
 \eeq
 
 We can argue $x$ by $x$ and define as $\eps \to 0$ (after extraction of subsequences, but the uniqueness of the limit shows that it is the full family) the strong limits 
 \[
 v_\eps (t,x) \to v (t,x) , \qquad  \vp_\eps \to \vp(t,x).
 \]
We can also define the weak limit
 \[
  \qquad u_\eps (t,x) \rightharpoonup m (t,x)\geq 0 \in \cam^1(0,T) .
  \]
These limits satisfy the equations obtained passing to the limit in~\eqref{eq:1D} and~\eqref{nd:phieps}
 \beq \label{nd:eqlim}
   \begin{cases}
\p_t \vp(t,x) = v - \mu, \qquad  \vp(t,x)\leq 0
\\
\p_t \ln (v)  = -m (t,x), \qquad \text{supp}(m) \subset \{\vp(t)=0\}.
\\
Q(v(t,x)) = Q(v^0(x)). 
\end{cases} \eeq

Because of this last equality, $v(t,x)$ belongs to one of the two branches $v^0_-(x)< v^0_+(x):=v^0(x)$ of roots of $Q(v(t,x)) = Q(v^0(x))$. Therefore $v - \mu$ is away from $0$ and there is a first time $\tau(x)$ such that $\vp \big(\tau (x), x \big)=0$ and $\vp(t,x) <0$ for $t \neq \tau(x)$. This time $\tau(x)$ is also the jump  time from one branch to the other for $v$ as stated in (i).
\\

We can  also compute, from the above equation on $\ln v$,  and this gives (ii), namely
\[
u_\eps =- \p_t \ln v_\eps \rightharpoonup u=[\ln v_+(x) - \ln v_-(x)] \delta(t-\tau(x)).
\]

The time $\tau(x)$ is fully identified from the limiting system: By integrating the equation on $\vp(t,x)$, we find that
  \[
  \vp(t,x)=\left\{\begin{aligned} &\vp(0,x)+t\,(v^0(x)-\mu),\quad t<\tau(x),\\
  &\vp(0,x)+\tau(x) (v^0(x)-\mu)+(t-\tau(x))\,(v_-^0(x)-\mu),\quad t>\tau(x).\\
  \end{aligned}\right.
  \]
  For $x>0$, as $v^0_-(x)<\mu<v^0(x)$, $\vp(t,x)$ is strictly increasing in time for $t<\tau(x)$, and strictly decreasing for $t>\tau(x)$.

Furthermore by the second equation, we have that $\vp(\tau(x),x)=0$. Hence this characterizes $\tau(x)$ as
  \[
\tau(x)=-\frac{\vp^0(x)}{v^0(x)-\mu}.
\]
This gives (iii) and identifies completely the limiting solution $x$ by $x$.

\qed
\end{proof}

%
\section{The full problem}
\label{sec:full}

When diffusion is included, the previous analysis, which uses fundamentally $x$ by $x$ convergence, does not apply and the assumptions have to take into account the diffusion term. We also make more general assumptions. For the initial data, we assume that
\beq
 u^0_\eps >0, \qquad \eps u^0_\eps \leq C,  \qquad \int_\R u^0_\eps dx \leq C,
\label{assumption2}
\eeq
and, $\vp^0_\eps =\eps \ln u^0_\eps$ satisfies
\beq
 \eps \partial^2_{xx} \vp^0_\eps \leq C, \qquad  |\p_x \vp^0_\eps| \leq C , \qquad  \vp^0_\eps (x) \geq - C(1+ |x|),
\label{assumption3}
\eeq
\beq
 \eps  \, | \partial^2_{xx} \ln v^0_\eps | \leq C, \qquad |\partial_x \ln v^0_\eps|+|\ln v^0_\eps| \leq C.
\label{assumption1}
\eeq
Then, recalling the definition $Q(v)= v-  \mu \ln v$, we also use 
\beq
 w= \ln v, \qquad \wt Q(w) = Q(v) = e^w-\mu w,
\eeq
and we can define uniquely two smooth branches of initial data $w^0_{\pm,\eps}$ by 
\beq \label{full:Q0}
\wt Q(w^0_\eps) = \wt Q(w^0_{-,\eps} )= \wt Q(w^0_{+,\eps} ), \qquad w^0_{-,\eps}  \leq \ln \mu   \leq w^0_{+,\eps} .
\eeq
Note that assumption \eqref{assumption1} provides corresponding bounds on $w^0_\eps$ and $w^0_{\pm,\eps}$,  which are
\beq
|w^0_\eps|+|w^0_{\pm,\eps}|\leq C, \quad |\partial_x w^0_\eps|+|\partial_x w^0_{\pm,\eps}|\leq C,\quad \eps\,|\partial_{xx}^2 w^0_\eps|+ \eps |\partial_{xx}^2 w^0_{\pm,\eps}|\leq C. \label{assumptionweps}
\eeq
And we also use, for an unessential result, the stronger condition
\beq
 \eps \partial^2_{xx}  (w^0_\eps -w^0_{-,\eps}) + u^0_\eps \leq C.
\label{assumption4}
\eeq
Finally, we use the notation $|W|_-= \max(0, -W)$.

Our main theorem now reads
%
\begin{theorem} \label{th:diff}
Assume that $u_\eps^0\in C^2\cap L^1(\R)$ and that assumptions \eqref{assumption2}--\eqref{assumption1} hold. The solution of \eqref{eq:full} has limits $u_\eps  \rightharpoonup u$ (weak measures) and $v_\eps  \to v$ (strongly in $L^p_{loc}$) and, there is a time $\tau(x)$ and $v^0_-(x)< \mu < v^0(x)$ such that
\\[2pt]
(i) $\; v(t,x) = v^0_-(x)$ for $t <\tau(x)$, \qquad $v(t,x) = v^0(x)$ for $t >\tau(x)$, 
\\[1pt]
(ii) $u(t,x) =  [\ln v^0(x) - \ln v^0_-(x)] \delta(t-\tau(x))$.
\\
\end{theorem}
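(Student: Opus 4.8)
The plan is to collapse the system to a single scalar reaction--diffusion equation for $w_\eps=\ln v_\eps$ of the monostable type announced in the introduction, and to read off the concentration from the Hopf--Cole variable $\vp_\eps=\eps\ln u_\eps$. Since the second equation of \eqref{eq:full} is an ODE in $t$ at each $x$, one has $u_\eps=-\p_t w_\eps\ge 0$ and $\int_0^t u_\eps\,ds = w^0_\eps-w_\eps$, so that $v_\eps$ is monotone nonincreasing in time for every fixed $x$. Differentiating the almost-conservation law $\p_t[\eps u_\eps+Q(v_\eps)]=\eps^2\p^2_{xx}u_\eps$ and integrating in time, I would obtain
\[
\eps u_\eps+\wt Q(w_\eps)=\eps u^0_\eps+\wt Q(w^0_\eps)+\eps^2\p^2_{xx}(w^0_\eps-w_\eps),
\]
which, after replacing $\eps u_\eps$ by $-\eps\p_t w_\eps$, rearranges into
\[
\p_t w_\eps-\eps\p^2_{xx}w_\eps=\f1\eps\big[\wt Q(w_\eps)-\wt Q(w^0_\eps)\big]-u^0_\eps-\eps\p^2_{xx}w^0_\eps .
\]
Since $\wt Q$ is strictly convex with minimum at $\ln\mu$, the bracket vanishes exactly on the two branches $w^0_{\pm,\eps}$ of \eqref{full:Q0} and is negative in between: this is the heterogeneous monostable structure that drives $w_\eps$ off the upper branch $w^0_{+,\eps}$ toward the lower one $w^0_{-,\eps}$.

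Next I would assemble the uniform estimates. Monotonicity gives the upper bound $v_\eps\le\sup v^0_\eps$ for free. The companion lower bound $v_\eps\ge v_m>0$, i.e. a uniform upper bound on $\wt Q(w_\eps)$, follows from the identity above once the diffusion remainder is controlled, and this is exactly where a uniform second-order estimate of semiconvexity type, $\eps\,|\p^2_{xx}w_\eps|\le C$ propagated from \eqref{assumptionweps}, is needed. In parallel, $\vp_\eps$ solves the viscous Hamilton--Jacobi equation
\[
\p_t\vp_\eps=\eps\p^2_{xx}\vp_\eps+|\p_x\vp_\eps|^2+(v_\eps-\mu),
\]
for which \eqref{assumption2}--\eqref{assumption3} furnish uniform Lipschitz and one-sided second-derivative bounds, and I would record the total-mass control $\int_0^T u_\eps\,dt=w^0_\eps-w_\eps(T,\cdot)\le C$ coming from the bounds on $v_\eps$.

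I would then pass to the limit. The monotonicity of $w_\eps$ in $t$, via Helly's selection theorem, together with the spatial gradient bound, yields strong $L^p_{loc}$ convergence $v_\eps\to v$ with $v$ nonincreasing in $t$. Because $\int\eps u_\eps\,dt=\eps(w^0_\eps-w_\eps)\to0$ and the remainder $\eps^2\p^2_{xx}(w^0_\eps-w_\eps)$ tends to $0$ distributionally (the $L^\infty$ bound on $w_\eps$ and the extra $\eps^2$ allow two integrations by parts), the conservation identity passes to the limit as the pointwise constraint $Q(v)=Q(v^0)$. As $Q$ is two-to-one, this forces $v(t,x)\in\{v^0_-(x),v^0_+(x)\}$ for a.e. $(t,x)$; being monotone in $t$ and starting on the upper branch, $v$ drops from $v^0=v^0_+$ to $v^0_-$ at a single time $\tau(x)$, which is (i). The Dirac mass is then read off from $\int_0^T u_\eps\,dt=w^0_\eps-w_\eps(T,\cdot)$: all the time variation of $w_\eps$ accumulates at the switch, so $u_\eps=-\p_t w_\eps\rightharpoonup(\ln v^0-\ln v^0_-)\,\delta(t-\tau(x))$, which is (ii). To guarantee that the switch actually occurs at a finite $\tau(x)$ I would use the limiting equation $\p_t\vp=|\p_x\vp|^2+(v-\mu)$ with the obstacle constraint $\vp\le0$ and $\text{supp}\,u\subset\{\vp=0\}$, obtained by stability of viscosity solutions: while $v=v^0_+>\mu$ the right-hand side is positive, so $\vp$ rises to touch $0$, triggering the concentration and the switch to $v^0_-<\mu$, after which $v-\mu<0$ keeps $\vp<0$ and forbids any second crossing.

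I expect the main obstacle to be the propagation in time of the uniform second-order estimate $\eps\,|\p^2_{xx}w_\eps|\le C$ (equivalently, a one-sided bound on $\eps^2\p^2_{xx}w_\eps$) through the stiff $\f1\eps$ reaction term, since this single estimate both delivers the lower bound on $v_\eps$ and secures the rigid two-valued constraint $Q(v)=Q(v^0)$; it should be attacked by differentiating the $w_\eps$-equation and running a maximum-principle argument that exploits the sign of $\wt Q''$. A secondary difficulty is the matching of the monostable transition with the Hamilton--Jacobi free boundary $\{\vp=0\}$, so that the discontinuity of $v$ and the support of the Dirac mass of $u$ lie on the same curve $t=\tau(x)$.
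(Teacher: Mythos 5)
Your skeleton matches the paper's: the monostable reformulation \eqref{full:fund2}, the Hopf--Cole variable and its Eikonal equation, monotonicity of $w_\eps$ in time, the two-branch rigidity $\wt Q(w)=\wt Q(w^0)$, and the reading of the Dirac mass from $\int_0^T u_\eps\,dt=w_\eps^0-w_\eps(T,\cdot)$. However, two steps you rely on are genuinely missing or would fail. The first is spatial compactness: you claim that Helly's theorem in time ``together with the spatial gradient bound'' yields $v_\eps\to v$ in $L^p_{loc}$, but there is no uniform-in-$\eps$ spatial gradient bound on $w_\eps$. From $\p_t\p_x w_\eps=-\p_x u_\eps=-\eps^{-1}\p_x\vp_\eps\, u_\eps$ one only gets $|\p_x w_\eps|\le C/\eps$, and no uniform modulus of continuity in $x$ can possibly hold since the limit $v$ is discontinuous across the front. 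Monotonicity in $t$ gives compactness for each fixed $x$ only along $x$-dependent subsequences, which does not produce $L^p_{t,x}$ convergence, and without strong convergence of $w_\eps$ you cannot pass to the limit in $\wt Q(w_\eps)$ to obtain the constraint $Q(v)=Q(v^0)$. The paper closes this with a Kruzhkov-type device (Lemma~\ref{lm:vepscompact}): the auxiliary function $\Phi(x,w)$ with $\Phi_w=|\wt Q(w^0_\eps(x))-\wt Q(w)|$ is uniformly BV in $x$ because $\p_x\Phi(x,w_\eps)$ pairs the $O(1/\eps)$ gradient of $w_\eps$ with the $O(\eps)$ smallness of $\wt Q(w_\eps)-\wt Q(w^0_\eps)$ in $L^1$ from \eqref{boundweps1}; monotonicity of $\Phi$ in $w$ then transfers the compactness back to $w_\eps$. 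Some substitute for this idea is indispensable.

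The second gap is the lower bound on $v_\eps$. You propose to obtain it by propagating $\eps\,|\p^2_{xx}w_\eps|\le C$ forward in time and correctly flag this as the main obstacle, but pushing a second-derivative bound through the stiff $\eps^{-1}\wt Q'$ term is not a viable route, and it is an obstacle you do not need to face. The paper's key remark is that, since $\p_t w_\eps=-u_\eps\le 0$, equation \eqref{full:fund2} already implies at every fixed time the \emph{elliptic} inequality \eqref{full:negbr} for $w_\eps-w^0_{-,\eps}$, whose right-hand side involves only the initial data through assumptions \eqref{assumption2}--\eqref{assumptionweps}; the elliptic maximum principle at a spatial minimum then yields the lower bound with no propagation of second derivatives at all. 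The same inequality, integrated against a cut-off instead of evaluated at a minimum, is exactly what produces the $O(\eps)$ estimate \eqref{boundweps1} feeding the compactness argument above, so both of your gaps are resolved by this single structural observation.
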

Compared to the case without diffusion in Theorem~\ref{th:nodiff}, there is no simple explicit formula for the jump time $\tau(x)$. However, here it is also characterized by $\vp(\tau(x),x)=0$ for the solution of an Eikonal equation. 
\\

The end of this section is devoted to the proof of Theorem~\ref{th:nodiff}.
\\

Our analysis is based on the  observation that $\p_t  \ln v_\eps=-u_\eps $, thus, from \eqref{eq:full},  we get the identity
\[
\p_t [\eps  u_\eps   + v_\eps   - \mu  \ln v_\eps + \eps^2 \partial^2_{xx} \ln v_\eps] =0,
\]
and consequently
\[
\eps \p_t  \ln v_\eps  - \eps^2 \partial^2_{xx} \ln v_\eps  - Q(v_\eps)  = - Q(v^0_\eps) - \eps  u^0_\eps -\eps^2 \partial^2_{xx} \ln v^0_\eps ,
\]
which we can write in terms of $w_\eps$ as 
\beq
\eps \p_t w_\eps  - \eps^2 \partial^2_{xx} w_\eps  - \wt Q(w_\eps)  = - \wt Q(w^0_\eps) - \eps  u^0_\eps - \eps^2 \partial^2_{xx} w^0_\eps.
\label{full:fund2}
\eeq
Notice that this is a monostable equation of Fisher/KPP type, where the steady states depend on~$x$.
\paragraph{A priori estimates on $v_\eps$.}

\begin{lemma} \label{lm:wbd} The inequalities hold
\beq
0< C  \leq  w_\eps (t,x)  \leq w^0_{+,\eps},
\label{boundweps0}
\eeq
and for any $R$,  there is a constant $C_R$ such that,
\beq
\int_{|x|\leq R} |\wt Q(w_\eps) - \wt Q(w^0_{\eps} ) | \ind{w_\eps - w^0_{-,\eps}\leq 0 }  \,dx \leq C_R\,\eps, \qquad \forall t\geq 0.
\label{boundweps1}
\eeq 
  Finally, with the additional assumption \eqref{assumption4}, we have $-C \sqrt{\eps} + w^0_{-,\eps} \leq  w_\eps (t,x) $.
\end{lemma}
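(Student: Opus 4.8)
The plan is to work throughout with the reformulated monostable equation \eqref{full:fund2} and with the shifted unknown $g_\eps := w_\eps - w^0_{-,\eps}$. Subtracting $\wt Q(w^0_{-,\eps})=\wt Q(w^0_\eps)$ (which holds by \eqref{full:Q0}) from \eqref{full:fund2} and using that $w^0_{-,\eps}$ is independent of $t$, one gets
\[
\eps\,\p_t g_\eps - \eps^2\,\partial^2_{xx} g_\eps - \big[\wt Q(w_\eps)-\wt Q(w^0_{-,\eps})\big] = -\eps\,u^0_\eps - \eps^2\,\partial^2_{xx}\big(w^0_\eps - w^0_{-,\eps}\big)=:R_\eps(x).
\]
I will record at the outset three structural facts to be used repeatedly: first, $\p_t w_\eps = \p_t\ln v_\eps = -u_\eps\le 0$, so $v_\eps$ and $w_\eps$ are nonincreasing in time; second, $\wt Q(w)=e^w-\mu w$ is strictly convex with minimum at $w=\ln\mu$, so it is decreasing on $(-\infty,\ln\mu]$; third, since $w^0_{-,\eps}\le\ln\mu$, on the set $\{g_\eps\le 0\}=\{w_\eps\le w^0_{-,\eps}\}$ both arguments lie in the decreasing branch and hence $A_\eps:=\wt Q(w_\eps)-\wt Q(w^0_{-,\eps})=\wt Q(w_\eps)-\wt Q(w^0_\eps)\ge 0$ there. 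Note also that $g_\eps(0,\cdot)=w^0_\eps-w^0_{-,\eps}\ge 0$, so $|g_\eps(0)|_-=0$.

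For the two-sided bound \eqref{boundweps0}, the upper bound is immediate from monotonicity: $w_\eps\le w^0_\eps\le w^0_{+,\eps}$, the last inequality because $w^0_\eps$ equals either $w^0_{+,\eps}$ (when $w^0_\eps\ge\ln\mu$) or $w^0_{-,\eps}\le w^0_{+,\eps}$ (when $w^0_\eps\le\ln\mu$). For the lower bound I would use a comparison principle with a constant subsolution: writing \eqref{full:fund2} as $\eps\,\p_t w_\eps-\eps^2\partial^2_{xx}w_\eps-\wt Q(w_\eps)=S_\eps(x)$ with $S_\eps$ bounded (the source is controlled by \eqref{assumption2} and \eqref{assumptionweps}, uniformly for $\eps\le 1$), and using that $\wt Q(a)\to+\infty$ as $a\to-\infty$, one picks a constant $a$ with $\wt Q(a)\ge \sup_x(-S_\eps)$ and $a\le \inf_x w^0_\eps$; then $a$ is a subsolution below the initial datum, and comparison on the range $[a,\ w^0_{+,\eps}]$ where $\wt Q$ is Lipschitz gives $w_\eps\ge a$.

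For \eqref{boundweps1} I would multiply the $g_\eps$-equation by $\chi_R\,\ind{g_\eps\le 0}$, with $\chi_R$ a smooth cutoff equal to $1$ on $|x|\le R$, supported in $|x|\le 2R$, with $|\partial^2_{xx}\chi_R|\le C/R^2$, and integrate in $x$. The reaction term reproduces exactly the target quantity $\int\chi_R\,A_\eps\,\ind{g_\eps\le0}\,dx\ge\int_{|x|\le R}|\wt Q(w_\eps)-\wt Q(w^0_\eps)|\,\ind{w_\eps\le w^0_{-,\eps}}\,dx$. The time term $\eps\int\chi_R\,\ind{g_\eps\le0}\,\p_t g_\eps = -\eps\int\chi_R\,\ind{g_\eps\le0}\,u_\eps\le 0$ has the favorable sign and is dropped; the source $-R_\eps=\eps u^0_\eps+\eps^2\partial^2_{xx}(w^0_\eps-w^0_{-,\eps})$ contributes $O(\eps)$ via $\int_\R u^0_\eps\le C$ and \eqref{assumptionweps}. \textbf{The main obstacle is the diffusion term} $-\eps^2\int\chi_R\,\ind{g_\eps\le0}\,\partial^2_{xx}g_\eps$, since the indicator is not smooth; I would handle it by Kato's inequality for the convex function $|\cdot|_-$, namely $-\ind{g_\eps<0}\,\partial^2_{xx}g_\eps\le \partial^2_{xx}|g_\eps|_-$ in the distributional sense, so that after pairing with $\chi_R\ge0$ the derivatives fall on $\chi_R$, giving a bound $\tfrac{\eps^2}{R^2}\int_{R\le|x|\le 2R}|g_\eps|_-\,dx\le C_R\,\eps^2$, where $|g_\eps|_-\le C$ is the already-established uniform bound. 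Collecting terms yields $I_R\le C_R\,\eps$.

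For the refined lower bound under \eqref{assumption4}, the point is that \eqref{assumption4} says precisely $u^0_\eps+\eps\,\partial^2_{xx}(w^0_\eps-w^0_{-,\eps})\le C$, hence $R_\eps\ge -C\eps$. A Taylor expansion of $\wt Q$ at $w^0_{-,\eps}$, combined with $\wt Q'(w^0_{-,\eps})\,g_\eps\ge0$ on $\{g_\eps\le0\}$ and the uniform bound $e^{\wt Q''}\ge c_0>0$ on the relevant compact range, gives the coercivity $A_\eps\ge \tfrac{c_0}{2}\,g_\eps^2$ there. Setting $p=|g_\eps|_-$, the $g_\eps$-equation then yields, on $\{p>0\}$, the differential inequality $\eps\,\p_t p-\eps^2\,\partial^2_{xx}p\le C\eps-\tfrac{c_0}{2}p^2$, for which the constant $K\sqrt\eps$ with $K^2=2C/c_0$ is a supersolution. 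Since $p(0,\cdot)=0\le K\sqrt\eps$, a maximum-principle comparison gives $p\le K\sqrt\eps$, that is $w_\eps\ge w^0_{-,\eps}-C\sqrt\eps$; the only care needed is the standard treatment of the comparison on the unbounded domain $\R$, using that $p$ is already known to be bounded.
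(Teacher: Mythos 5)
Your proposal is correct and follows essentially the same route as the paper: the reformulated monostable equation \eqref{full:fund2}, monotonicity in time for the upper bound, a maximum-principle/comparison argument exploiting $\wt Q(w)\to+\infty$ as $w\to-\infty$ for the lower bound, Kato's inequality for $|\cdot|_-$ integrated against a cutoff for \eqref{boundweps1}, and the quadratic coercivity $\wt Q(w_\eps)-\wt Q(w^0_{-,\eps})\gtrsim (w_\eps-w^0_{-,\eps})^2$ on the decreasing branch for the $\sqrt\eps$ refinement. The only (cosmetic) differences are that you drop the sign-definite time-derivative term at the integrated level rather than pointwise, and conclude via constant sub/supersolutions rather than by evaluating the resulting elliptic inequality at extremum points.
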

\begin{proof}
For the first statement,  on the one hand, we note that we necessarily have that for any $x$, $w_\eps^0(x)=w^0_{-,\eps}(x)$ or $w_\eps^0(x)=w^0_{+,\eps}(x)$. In both cases, that implies that $w_\eps^0(x)\leq w^0_{+,\eps}(x)$. Since $v_\eps$ is non-increasing in time, so is $w_\eps$ and
\[
w_\eps(t,x)\leq w^0_\eps(x)\leq w^0_{+,\eps}(x).
 \]
On the other hand, as $\p_t w_\eps \leq 0$, we can deduce from \eqref{full:fund2},  that
\beq \label{full:negbr}
-\eps^2 \partial^2_{xx} (w_\eps - w^0_{-,\eps} )  + [ \wt Q(w^0_{-,\eps} )-\wt Q(w_\eps) ] \geq- \eps^2 \partial^2_{xx}  (w^0_\eps -w^0_{-,\eps}) - \eps u^0_\eps.
\eeq
Using the maximum principle and assumptions \eqref{assumption2}, \eqref{assumption1}, we conclude that at a minimum value of $w_\eps - w^0_{-,\eps}$, the quantity  $\wt Q(w^0_{-,\eps} )-\wt Q(w_\eps)$ is controlled from below and  the lower bound on $w_\eps$ follows.
\\

Furthermore, from the usual convex inequalities, we also observe that
\beq
- \eps^2  \partial^2_{xx} (w_\eps - w^0_{-,\eps} )_-  +  [\wt Q(w_\eps) - \wt Q(w^0_{-,\eps} )]  \ind{w_\eps - w^0_{-,\eps} <0}   \leq \eps\,u^0_\eps+\eps^2 |\partial^2_{xx}  (w^0_\eps -w^0_{-,\eps})|.\label{eqw-}
\eeq
 Because the set $\{W \, s.t. \,W - w^0_{-,\eps} <0 \}$ lies in the decreasing branch of $\wt Q$, the quantity 
 $ [\wt Q(w_\eps) - \wt Q(w^0_{-,\eps} )]  \ind{w_\eps - w^0_{-,\eps} <0}  $ is positive.  We integrate against some smooth non-negative $\psi$ with $\psi=1$ on $B(0,R)$ and~$\psi$ compactly supported in $B(0,2R)$.  Since we have already proved that $w_\eps - w^0_{-,\eps}$ is bounded, we obtain the estimate
 \[
\int_{|x|\leq R} |\wt Q(w_\eps) - \wt Q(w^0_{\eps} ) | \ind{w_\eps - w^0_{-,\eps}\leq 0 }  \,dx \leq C_R\,\eps^2+\eps\,\int_{|x|\leq R} u_\eps^0\,dx+\eps^2\,\int_{|x|\leq R}|\partial^2_{xx}  (w^0_\eps -w^0_{-,\eps})| \,dx.
\]
Using assumption~\eqref{assumption2} and assumption \eqref{assumption1} concludes the second point of the lemma.

We may also use the specific assumption \eqref{assumption4} in~\eqref{eqw-}, we obtain that
\[
- \eps^2  \partial^2_{xx} (w_\eps - w^0_{-,\eps} )_-  +  [\wt Q(w_\eps) - \wt Q(w^0_{-,\eps} )]  \ind{w_\eps - w^0_{-,\eps} <0}   \leq C \eps.
\]
Recalling that $ [\wt Q(w_\eps) - \wt Q(w^0_{-,\eps} )]  \ind{w_\eps - w^0_{-,\eps} <0}  $ is positive, we conclude that
 \[
 Q(w_\eps) - \wt Q(w^0_{-,\eps} )\ind{w_\eps - w^0_{-,\eps} <0}  \leq C \eps, 
\]
and thus the third statement of Lemma \ref{lm:wbd} holds, namely $w_\eps - w_{-,\eps}\geq -C\,  \sqrt \eps$.
\\
\qed
\end{proof} 
%
\paragraph{Concentration  dynamics of $u_\eps$.}
We turn to the study of $u_\eps$ and begin with some simple estimates.
\\
$\bullet$ Since $u_\eps=-\p_t w_\eps$, and using the   bound~\eqref{boundweps0},  we find
\beq \label{est:utimeint}
\sup_x \int_0^T u_\eps (t,x ) dt = \sup_x [ w_\eps^0(x)-w_\eps(x,T) ] \leq C(T),
\eeq
and thus, integrating the equation on $u_\eps$, we also get
\beq \label{bd:epsu}
\eps\, \int_{\R} u_\eps(t,x)\,dx =\eps\,\int_{\R} u^0_\eps (x)\,dx  + \int_{\R}\int_0^t u_\eps(s,x) (v_\eps(s,x)-\mu) ds\,dx \leq C(t).
\eeq

\bigskip

$\bullet$ Next, we use the Hopf-Cole transform
\[
\vp_\eps (t,x) =  \eps \ln u_\eps(t,x).
\]
As usual, we compute that $\vp_\eps$ satisfies the Eikonal equation
\beq \label{eq:phieps}
\p_t \vp_\eps =\eps \partial^2_{xx} \vp_\eps  +|\partial_x \vp_\eps |^2 + v_\eps - \mu .
\eeq

We are going to show some uniform bounds on $\vp_\eps$.
\begin{lemma}\label {lm:bdphieps}
We have
\beq \label{bd:dtphi}
\p_t \vp_\eps (t,x) \leq  C,  \qquad \forall x \in \R, \; t\geq 0
\eeq
\beq \label{bd:dxphi}
\| \partial_x \vp_\eps (t)\|_{L^\infty(\R)} \leq C, \qquad   \| \p_t \vp_\eps \|_{L^p_{t,x}} \leq C(p), \quad \forall p\in [1, \infty),
\eeq
\beq \label{bd:phiup}
-C(t) (1+|x|)  \leq \vp_\eps(t,x) \leq 2\,\eps\,\ln\frac{1}{\eps}+C(t)\,\eps, \qquad   \forall x \in \R, \; t\geq 0 .
\eeq
\end{lemma}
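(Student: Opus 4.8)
The plan is to read all three estimates off the Eikonal equation \eqref{eq:phieps}, rewritten as $\p_t \vp_\eps - \eps \partial^2_{xx}\vp_\eps = |\partial_x \vp_\eps|^2 + v_\eps - \mu$, using the maximum principle for the first-order quantities and parabolic regularity for the integral bound. For the one-sided time bound \eqref{bd:dtphi}, I would differentiate \eqref{eq:phieps} in $t$ and set $a_\eps = \p_t \vp_\eps$, which solves $\p_t a_\eps - \eps \partial^2_{xx} a_\eps - 2\partial_x \vp_\eps\,\partial_x a_\eps = \p_t v_\eps$. Since $\p_t v_\eps = -u_\eps v_\eps \leq 0$, this is a linear parabolic inequality with no zeroth-order term, so $\sup_x a_\eps(t,\cdot)$ is non-increasing in time; evaluating at an interior maximum (where $\partial_x a_\eps=0$, $\partial^2_{xx} a_\eps\leq 0$) makes this precise. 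It then suffices to bound $a_\eps(0,\cdot) = \eps \partial^2_{xx}\vp^0_\eps + |\partial_x \vp^0_\eps|^2 + v^0_\eps - \mu$ from above, which is exactly what assumptions \eqref{assumption3} and \eqref{assumption1} provide.

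The gradient bound in \eqref{bd:dxphi} is where the real difficulty lies, and I expect it to be the main obstacle. Differentiating \eqref{eq:phieps} in $x$, the quantity $b_\eps = \partial_x \vp_\eps$ solves the viscous Burgers equation $\p_t b_\eps - \eps \partial^2_{xx} b_\eps - 2 b_\eps\,\partial_x b_\eps = \partial_x v_\eps$, and testing at a spatial extremum gives $\f{d}{dt}\|b_\eps(t)\|_{L^\infty} \leq \|\partial_x v_\eps(t)\|_{L^\infty}$. Everything therefore reduces to a uniform-in-time bound on $\partial_x v_\eps = v_\eps\,\partial_x w_\eps$, i.e. on $\partial_x w_\eps$. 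The naive route of differentiating the monostable equation \eqref{full:fund2} in $x$ fails, since it produces a forcing involving $\eps\,\partial_x u^0_\eps$ and $\eps^2 \partial^3_{xxx} w^0_\eps$ that the assumptions do not control, while the resulting zeroth-order coefficient $\wt Q'(w_\eps) = v_\eps - \mu$ has no favorable sign. Instead I would exploit the monostable structure highlighted after \eqref{full:fund2}: by Lemma \ref{lm:wbd} the profile $w_\eps$ is trapped in the thin band $[\,w^0_{-,\eps}-C\sqrt\eps,\ w^0_{+,\eps}\,]$ between two branches of uniformly bounded gradient (see \eqref{assumptionweps}), and it is monotone decreasing in time toward the lower branch. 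The goal is to convert this trapping into a Lipschitz bound on $w_\eps$, either by comparing $\partial_x w_\eps$ against the branch gradients through the maximum principle or by controlling the relaxation layer directly; this is the step that genuinely uses the sign structure of the reaction and is the technical heart of the lemma.

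Granting the gradient bound, the $L^p$ estimate in \eqref{bd:dxphi} follows cleanly by maximal parabolic regularity. Writing $\vp_\eps$ as the solution of $\p_t \vp_\eps - \eps \partial^2_{xx}\vp_\eps = F_\eps$ with $F_\eps = |\partial_x \vp_\eps|^2 + v_\eps - \mu$, the gradient bound together with the bounds on $v_\eps$ from Lemma \ref{lm:wbd} makes $F_\eps$ bounded, hence locally in $L^p$. The Calder\'on--Zygmund estimate for $\p_t - \eps \partial^2_{xx}$ --- whose constant is independent of $\eps$ by the parabolic rescaling $x \mapsto \sqrt\eps\,x$ --- then yields $\|\p_t \vp_\eps\|_{L^p_{t,x}} \leq C(p)\,(\|F_\eps\|_{L^p} + \text{initial data})\leq C(p)$, the initial contribution being controlled by \eqref{assumption3}.

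For the two bounds in \eqref{bd:phiup}, the upper bound is an $L^1$--$L^\infty$ interpolation for $u_\eps = e^{\vp_\eps/\eps}$: the mass estimate \eqref{bd:epsu} gives $\|u_\eps(t)\|_{L^1}\leq C(t)/\eps$, while the gradient bound gives $|\partial_x \ln u_\eps| = |\partial_x \vp_\eps|/\eps \leq C/\eps$. A nonnegative function whose logarithm is Lipschitz with constant $C/\eps$ varies by at most a factor $e$ over intervals of length $\eps/C$, so its $L^1$ norm is at least of order $(\eps/C)\,\|u_\eps(t)\|_{L^\infty}$; this forces $\|u_\eps(t)\|_{L^\infty}\leq C(t)/\eps^2$, that is $\vp_\eps \leq 2\eps\ln\f1\eps + C(t)\,\eps$. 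The lower bound I would obtain by comparison with the linear-growth subsolution $\psi(t,x) = -A(t)\sqrt{1+x^2}$ of \eqref{eq:phieps}: since $\eps\partial^2_{xx}\psi \geq -\eps A$, $|\partial_x\psi|^2\geq 0$ and $v_\eps>0$, choosing $A$ with $A'\geq \eps A + \mu$ makes $\psi$ a subsolution, and $\vp^0_\eps \geq -C(1+|x|)$ from \eqref{assumption3} gives $\psi(0,\cdot)\leq \vp^0_\eps$; the comparison principle for \eqref{eq:phieps} then yields $\vp_\eps \geq -C(t)(1+|x|)$.
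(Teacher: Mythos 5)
Most of your proposal coincides with the paper's proof: the one-sided bound on $\p_t\vp_\eps$ by differentiating in time and using $\p_t v_\eps\le 0$, the $L^p$ bound on $\p_t\vp_\eps$ by parabolic regularity once the gradient is controlled, the upper bound in \eqref{bd:phiup} from the mass bound \eqref{bd:epsu} combined with the spatial Lipschitz bound, and the lower bound by a linearly growing subsolution are all exactly the paper's steps. The genuine gap is the spatial Lipschitz bound itself, which you correctly identify as the heart of the lemma but do not prove. The route you sketch cannot be closed: writing $b_\eps=\p_x\vp_\eps$ as a solution of viscous Burgers forced by $\p_x v_\eps$ reduces everything to a uniform bound on $\p_x v_\eps=v_\eps\,\p_x w_\eps$, and such a bound is false in this singular regime. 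The limit $v$ has a jump discontinuity (that is the whole point of the theorem), the transition layer of $w_\eps$ has width $O(\eps)$, and the sharp a priori estimate is only $|\p_x w_\eps|\le C/\eps$ (Lemma \ref{lm:vepscompact}). Your fallback --- deducing Lipschitz control from the trapping $w^0_{-,\eps}-C\sqrt\eps\le w_\eps\le w^0_{+,\eps}$ --- fails for the same reason: the band between the two branches has width of order one and $w_\eps$ crosses it over a length scale $O(\eps)$, so no gradient bound follows from the trapping alone.

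The paper's argument avoids any estimate on $\p_x v_\eps$ and uses only the sup bound on $v_\eps$ together with the already established \eqref{bd:dtphi}: at a point $x_\eps$ where $\p_x\vp_\eps(t,\cdot)$ attains an interior extremum one has $\p^2_{xx}\vp_\eps(t,x_\eps)=0$, so the Eikonal equation \eqref{eq:phieps} evaluated at that point gives $|\p_x\vp_\eps(t,x_\eps)|^2=\p_t\vp_\eps(t,x_\eps)-(v_\eps-\mu)\le C+\mu$, using $\p_t\vp_\eps\le C$ and $v_\eps\ge0$. Applying this at the maximum and at the minimum of $\p_x\vp_\eps$ bounds $\|\p_x\vp_\eps(t)\|_{L^\infty}$ directly, with the usual penalization argument if the extrema are not attained. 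This Bernstein-type observation is the one missing idea in your proposal; once it is in place, the other three estimates go through essentially as you wrote them.
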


\begin{proof} 
For the time derivative,  differentiating \eqref{eq:phieps} and using the equation on $v_\eps$, we find 
\[
\p_t (\p_t \vp_\eps) =\eps \partial^2_{xx} (\p_t \vp_\eps)  +2 \partial_x \vp_\eps  \partial_x (\p_t \vp_\eps)  - u_\eps v_\eps \leq \eps \partial^2_{xx} (\p_t \vp_\eps)  +2 \partial_x \vp_\eps  \partial_x (\p_t \vp_\eps),
\]
so that the maximum principle gives $\p_t \vp_\eps (t,x) \leq \max_x  \p_t \vp_\eps^0(x)$ which gives  \eqref{bd:dtphi} thanks to the assumption~\eqref{assumption3}. 
\\

Next, we  prove the Lipschitz bound. Consider any point $x_\eps$ that is a maximum in $x$ of $\partial_x \vp_\eps$ at any time $t$ (standard arguments apply if the maximum is not achieved, see \cite{CEL_hj, Barles1994}). Then $\partial_{xx}^2 \vp_\eps(x_\eps)=0$ and we conclude, still using~\eqref{bd:dtphi},  that
\beq \label{est:phiLip}
|\p_x\vp_\eps(t,x_\eps)|^2=\p_t\vp_\eps(t,x_\eps)+v_\eps-\mu\leq C.
\eeq
Once $\partial_x \vp_\eps\in L^\infty_{t,x}$ uniformly, standard parabolic estimates provide a uniform bound on $\partial_t \vp_\eps$ in $L^p_{t,x}$ for any $1 \leq p<\infty$.
\\

Finally, since $\vp_\eps$ is uniformly Lipschitz in $x$, let $x_\eps$ be a maximum of $\vp_\eps$, then
\[
\vp_\eps(t,x)\geq \max \vp_\eps(t,.)-C\,|x-x_\eps|,
\]
so that, using \eqref{bd:epsu},
\[
\frac{C}{\eps} \geq \int_\R u_\eps(t,x)\,dx\geq \int_\R e^{\max \vp_\eps(t,.)/\eps}\,e^{-C\,|x-x_\eps|/\eps}\,dx\geq \frac{\eps}{C}\, e^{\max \vp_\eps(t,.)/\eps}, 
\]
which proves the upper bound in~\eqref{bd:phiup}. The lower bound relies, as it is standard \cite{Barles1994,CEL_hj,BMP09}, on the construction of a sub-solution. Here one can immediately check that $-C(t+1) - \frac{C x^2}{\sqrt {1+ |x|^2}}$ will work.
\qed
\end{proof}

\paragraph{Compactness of  $v_\eps$.}

We introduce the quantity $\Phi(x,w)$, defined up to a constant, by 
$$
\Phi_w (x,w)= |\wt Q(w^0_\eps (x) )- \wt Q(w)| \geq 0.
$$
\begin{lemma} \label{lm:vepscompact}
With assumptions~\eqref{assumption1}--\eqref{assumption2}, we have
\[
\sup_{x \in \R, \, 0\leq t \leq T} | \p_x w_\eps | \leq \f {C_{T}} \eps , \qquad  \int_0^T \! \! \int_{|x| \leq R}  \big| \p_x  \Phi(x,w_\eps(t,x) ) \big| \leq C_{T, R} .
\]
Consequently, by monotonicity of $\Phi$ in $w$,  $v_\eps$ is  locally compact in $L^p( (0, \infty)\times \R)$ for any $1\leq p<\infty$. 
\end{lemma}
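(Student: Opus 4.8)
The plan is to establish the two quantitative bounds and then read off compactness from a $BV$ argument.

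\textbf{The gradient bound.} I would obtain $\sup|\p_x w_\eps|\le C_T/\eps$ by a parabolic rescaling of the fundamental equation~\eqref{full:fund2}. Rewriting it as $\eps\p_t w_\eps-\eps^2\p^2_{xx}w_\eps = F_\eps$ with $F_\eps := \wt Q(w_\eps)-\wt Q(w^0_\eps)-\eps u^0_\eps-\eps^2\p^2_{xx}w^0_\eps$, the bounds of Lemma~\ref{lm:wbd} together with \eqref{assumption2} and \eqref{assumptionweps} show that $F_\eps$ is bounded in $L^\infty$ uniformly in $\eps$ and $t$. Setting $x=\eps\xi$, $t=\eps s$ turns the operator $\eps\p_t-\eps^2\p^2_{xx}$ into the heat operator $\p_s-\p^2_{\xi\xi}$ acting on the bounded function $w_\eps$, with bounded source $F_\eps$ and, by \eqref{assumptionweps}, bounded initial gradient $\p_\xi w_\eps(0,\cdot)=\eps\,\p_x w^0_\eps$. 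Interior parabolic estimates then give $|\p_\xi w_\eps|\le C$ uniformly in time (the source and solution bounds being uniform, and the initial gradient controlling small $s$), and undoing the scaling yields $|\p_x w_\eps|\le C/\eps$.

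\textbf{The spatial $BV$ bound.} Writing $\p_x\Phi(x,w_\eps)=\Phi_x(x,w_\eps)+\Phi_w(x,w_\eps)\,\p_x w_\eps$, the first term is bounded pointwise, since $\Phi_x$ involves only $\wt Q'(w^0_\eps)\p_x w^0_\eps$ integrated over the bounded range of $w_\eps$, so $\int_0^T\!\int_{|x|\le R}|\Phi_x|\le C_{T,R}$ by \eqref{assumption1}. The substance is the term $\Phi_w\,\p_x w_\eps=|\wt Q(w_\eps)-\wt Q(w^0_\eps)|\,\p_x w_\eps$. Denoting $A_\eps:=\wt Q(w_\eps)-\wt Q(w^0_\eps)$, combining $|\p_x w_\eps|\le C/\eps$ with an $L^1$ control $\int\!\int|A_\eps|\le C\eps$ would close the estimate; the $L^1$ bound I would seek by a Kruzhkov/renormalization argument, multiplying \eqref{full:fund2} by $\sgn(A_\eps)$. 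The contributions of $\eps\p_t w_\eps$ and of the data $\eps u^0_\eps+\eps^2\p^2_{xx}w^0_\eps$ are then $O(\eps)$ after integration, using $\p_t w_\eps=-u_\eps$, the bounds \eqref{est:utimeint}--\eqref{bd:epsu}, and \eqref{assumptionweps}.

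\textbf{The obstacle.} The hard part is the diffusion term $-\eps^2\sgn(A_\eps)\p^2_{xx}w_\eps$: integrating by parts produces $\eps^2\int \p_x[\sgn(A_\eps)]\,\p_x w_\eps$, a product of the measure $\p_x\sgn(A_\eps)$ (supported on the sign changes of $A_\eps$) with $\p_x w_\eps$, whose size is exactly $O(1/\eps)$ there. Controlling it requires exploiting the branch structure: on the set $\{w_\eps-w^0_{-,\eps}\le0\}$ the sharper estimate~\eqref{boundweps1} confines $|A_\eps|$ to be $O(\eps)$ in $L^1$, whereas on the complementary upper-branch set the sign of $A_\eps$ is tied monotonically to $\p_x w_\eps$, so that the increasing variation of $x\mapsto\Phi(x,w_\eps(t,\cdot))$ is controlled by its (bounded) range. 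Reducing to the one-sided total variation via $\int|\p_x\Phi|\le 2\int(\p_x\Phi)_++C$, with $\Phi$ bounded, is the device I expect to make this step work; getting a clean sign here is the delicate point.

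\textbf{Compactness.} Finally, since $\p_t\Phi(x,w_\eps)=-|A_\eps|\,u_\eps$ with $|A_\eps|$ bounded and $\int_0^T u_\eps\,dt\le C(T)$ by \eqref{est:utimeint}, the quantity $\Phi(\cdot,w_\eps)$ is also uniformly bounded in $BV$ in time. Hence $\Phi(x,w_\eps)$ is bounded in $BV_{loc}\big((0,\infty)\times\R\big)$ and therefore precompact in $L^1_{loc}$. Because $w\mapsto\Phi(x,w)$ is continuous and strictly increasing (its $w$-derivative $|\wt Q(w^0_\eps)-\wt Q(w)|$ vanishing only at the two branch values), its inverse is uniformly continuous on the bounded range of $w_\eps$ from \eqref{boundweps0}, so compactness transfers to $w_\eps$ and then, through $v_\eps=e^{w_\eps}$ and the $L^\infty$ bounds of Lemma~\ref{lm:wbd}, to $v_\eps$ in every $L^p_{loc}$, $1\le p<\infty$.
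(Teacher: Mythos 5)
Your treatment of the Lipschitz bound and of the final compactness step is sound, but the central estimate of the lemma --- the $BV$ bound in $x$ --- is left with a hole that you yourself flag and do not close. You correctly reduce the matter to showing $\int_0^T\!\int_{|x|\le R}|A_\eps|\,dx\,dt\le C_{T,R}\,\eps$ with $A_\eps:=\wt Q(w_\eps)-\wt Q(w^0_\eps)$, but the route you propose (multiplying \eqref{full:fund2} by $\sgn(A_\eps)$) founders, as you note, on the commutator $\eps^2\int \p_x[\sgn(A_\eps)]\,\p_x w_\eps$, and neither of your suggested rescues works: the one-sided total-variation reduction $\int|\p_x\Phi|\le 2\int(\p_x\Phi)_++C$ merely replaces $|A_\eps|\,|\p_x w_\eps|$ by $|A_\eps|\,(\p_x w_\eps)_+$, which still demands the same $L^1$ smallness of $A_\eps$ since $(\p_x w_\eps)_+$ is only $O(1/\eps)$; and the assertion that on the upper-branch set ``the sign of $A_\eps$ is tied monotonically to $\p_x w_\eps$'' is not correct --- the sign of $A_\eps$ there is unrelated to $\p_x w_\eps$.

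The missing point is that no renormalization is needed, because the sign of $A_\eps$ is determined pointwise by the branch. Since $\wt Q$ decreases below $\ln\mu$ and increases above, and $w_\eps\le w^0_{+,\eps}$ by \eqref{boundweps0}, one has $A_\eps\ge 0$ on $\{w_\eps\le w^0_{-,\eps}\}$ and $A_\eps\le 0$ on its complement, whence $|A_\eps|=2A_\eps\ind{w_\eps- w^0_{-,\eps}\le 0}-A_\eps$. The first term is $O(\eps)$ in $L^1_{loc}$ by \eqref{boundweps1}. For the second, integrate \eqref{full:fund2} against a nonnegative cutoff: $-A_\eps=\eps u_\eps+\eps^2\p^2_{xx}w_\eps-\eps u^0_\eps-\eps^2\p^2_{xx}w^0_\eps$, and each term is $O(\eps)$ in $L^1_{loc}$ by \eqref{est:utimeint}, an integration by parts combined with the $L^\infty$ bound on $w_\eps$, \eqref{assumption2} and \eqref{assumptionweps}. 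This yields the desired $\int\!\int|A_\eps|\le C_{T,R}\,\eps$ and closes your argument; it is also what the paper's one-line appeal to Lemma~\ref{lm:wbd} in \eqref{BVPhi} is implicitly doing. As a side remark, your proof of $|\p_x w_\eps|\le C_T/\eps$ by parabolic rescaling and interior estimates is correct but heavier than the paper's: since $\p_t\p_x w_\eps=-\p_x u_\eps=-\eps^{-1}\p_x\vp_\eps\,u_\eps$, the Lipschitz bound \eqref{est:phiLip} on $\vp_\eps$ together with \eqref{est:utimeint} gives the same conclusion by direct integration in time; your version has the modest advantage of not relying on Lemma~\ref{lm:bdphieps}.
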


\begin{remark}
 It is also possible to conclude from this lemma that $w_\eps$ is uniformly bounded in $L^1([0,\ T],\;W^{\theta,1}(\R))$ for some $\theta>0$; see Appendix~\ref{ap:sobolev}.
\end{remark}
\begin{proof}
First of all, calculate
\[
\p_t\partial_x w_\eps=-\partial_x u_\eps =- \frac{\partial_x \vp_\eps}{\eps}\,u_\eps,
\]
which yields, from the Lipschitz bound on $\vp_\eps$ in \eqref{est:phiLip} and the estimate~\eqref{est:utimeint},
\[
|\partial_x w_\eps(t,x)|\leq |\partial_x w_\eps^0(x)|+\frac{C}{\eps}\,\int_0^t u_\eps(s,x)\,ds \leq \frac{C(t)}{\eps}.
\]

Next, we write $ \p_x [\Phi(x,w_\eps (t,x))]  = \p_x \Phi(x,w_\eps ) + [\wt Q(w_\eps^0) - \wt Q(w_\eps)] \p_x w_\eps $. Since $\p_x \Phi(x,w_\eps ) $ is bounded in $L^\infty$, and thanks to the second bound in Lemma~\ref{lm:wbd}, we conclude that 
\begin{equation}
 \int_0^T \! \! \int_{|x| \leq R}  |\p_x \Phi(x,w_\eps ) | \leq C_{T, R} + \eps \sup_{|x| \leq R, \, 0\leq t \leq T} | \p_x w_\eps |   \int_0^T \! \! \int_{|x| \leq R}  | \frac{Q(v^0_\eps) - Q(v_\eps)}{\eps} | \leq C_{T, R} .\label{BVPhi}
\end{equation}
Since $\p_t w_\eps \leq 0$ and $w_\eps$ bounded provide us with compactness in time, we conclude that  $\Phi(x,w_\eps )$ is compact and thus converges a.e. By monotonicity of $\Phi$ in $w_\eps$, we also conclude that $w_\eps $ converges a.e.
\qed
\end{proof}

\paragraph{Convergence as $\eps \to 0$.}  We are now ready to study the limit as $\eps$ vanishes.
\\
$\bullet$ The bounds in Lemma~\ref {lm:bdphieps} show that $\vp_\eps$ is locally compact in $C(\R_+\times \R)$ and hence, after extraction of a subsequence that we still denote by $\eps$, there exists $\vp$  which is Lipschitz in space and with time derivatives in $L^p$ such that
\[
\|\vp_\eps - \vp\|_{L^\infty ((0,T)\times (-R, R))}\to 0,\quad \mbox{as}\ \eps \to 0, \qquad \forall T>0, \; R >0.
\]
We note that
\beq
 -C(1+t) -C |x| \leq \vp(t,x) \leq 0.
\eeq
$\bullet$ From Lemma \ref{lm:vepscompact}, we also conclude that  $v_\eps$ converges locally;  for any $1\leq p<\infty$,
\[
v_\eps \to v,  \quad w_\eps \to w=\ln v  \quad \text{in} \quad L^p((0,T) \times (-R,R)), \qquad \forall T>0, \; R>0.
\]
$\bullet$ From the bound \eqref{est:utimeint}, we can also extract a subsequence such that, in the weak sense of measures, 
\[
u_\eps \rightharpoonup u,  \quad \text{in} \quad \mathcal M ((0,T) \times (-R,R)), \qquad \forall T>0, \; R>0.
\]
We may pass to the limit, in distributional sense, in Equations~\eqref{eq:full} and get, as $\eps \to 0$,
\[
u_\eps v_\eps \rightharpoonup  u \mu , \qquad \p_t w =- u \qquad \p_t v= \text{w-lim } (-u_\eps v_\eps) =- u \mu ,
\]
which expresses that the concentration of the measure $u_\eps$ is exactly at the point where $v_\eps=\mu$. 
And from Lemma \ref{lm:wbd}, we know that
\[
\wt Q(w)= \wt Q(w^0), \quad i.e.,  \quad   w(t,x) = w^0_-(x) \quad or \quad w^0_+(x).
\]

Since $w$ is non-decreasing in time, we conclude that, for all $x\in \mathcal S$, a subset of $\R$ where $w^0=w^0_-$,  there is a unique time $\tau(x)$ such that $w$ jumps from $w^0_-(x) $ to $w^0_+(x) $ (with $\tau(x)= \infty$ for $x<0$), and 
\[
u(t,x) =[w^0_+(x)- w^0_-(x)] \delta(t-\tau(x)) \ind{x \in \mathcal S},  \qquad \vp(\tau(x), x) = 0.
\]

\paragraph{Open questions.} Uniqueness for the limit problem, which we proved when diffusion is ignored (Section~\ref{sec:nodiff}, is an open question in full generality. In particular it seems hard to determine more properties about the set $\mathcal S$, which depends on the initial data. In the monotone case when $w^0(x)=w^0_-(x)$ for $x<0$ and $w^0(x)=w^0_+(x)$ for $x>0$ with $u^0= [[ w^0]] \delta(x)$, one can expect $\tau(x)$ be invertible and to obtain a front located at some value $x=X(t)$. 

%
%
%
\medskip

\noindent{\bf Acknowledgments.} 
The authors would like to thank the Institut Henri Poincar\'e for its support and hospitality during the program ``Mathematical modeling of organization in living matter''.
The authors also thank similarly the Isaac Newton Institute for Mathematical Sciences, Cambridge, for support and hospitality during the program "Frontiers in kinetic theory: connecting microscopic to macroscopic scales - KineCon 2022" where work on this paper was undertaken. This work was supported by EPSRC grant no EP/K032208/1. B.P. has received funding from the European Research Council (ERC) under the European Union's Horizon 2020 research and innovation programme (grant agreement No 740623). P.~E. Jabin is partially supported by NSF DMS Grants DMS-2049020, DMS-2205694, and DMS-2219397.

\appendix

\section{Traveling wave} 

Traveling waves are an intuitive way to understand, in a very particular case, the general behavior of system~\eqref{eq:full}. considering solutions of the form $u_\eps(x-\sg t)$,  $v_\eps(x-\sg t)$,  $w_\eps= \ln v_\eps$. Recalling the notation $\wt Q( w ) =e^{w}- \mu w$, we arrive at an equation on the single quantity $w_\eps(y)$, 
\[
-\sg \eps w_\eps' - \eps^2 w_\eps''=\wt Q( w_\eps)- A,
\] 
with the conditions at infinity
\[
w_\eps(-\infty)=w_- < \ln \mu, \qquad w_\eps(+ \infty)= w_+ > \ln \mu, \qquad A= \wt Q( w_-)
=\wt Q( w_+).
\]
This is just a Fisher/KPP monostable equation with $w_+$ the unstable state and we know from the general theory~\cite{LamLou} that there is a traveling wave with minimal speed $\sg_*$ which is characterized by the property of a double root for the polynomial
\[
-\sg \eps \lb -\eps^2 \lb^2 = \wt Q'(w_+),
\]
that is $\sg_*= 2 \sqrt{ \wt Q'(w_+)}$.
In our analysis, this value $\sg_*$  also appears in the limit  of Equation~\eqref{eq:phieps}, that is the  Eikonal equation 
\[
\p_t \vp  = |\partial_x \vp |^2 + v - \mu ,
\]
which for the traveling wave problem generates a solution $\vp(x-\sg t)$ with 
\[
\sg \vp' (y)= | \vp'(y)|^2 + v_\pm(y) -\mu, 
\]
where $v_\pm(y)=v_-$ for $y<0$ and $v_\pm(y)=v_+$ for $y>0$.
The limiting minimal speed  traveling wave solution is 
\[
\vp(y)= 
\begin{cases}
p_- y <0\qquad  &\text{for} \quad y <0,
\\
 p_+ y <0  \qquad & \text{for} \quad  y >0,
\end{cases}
\]
and $p_+$ is the double root of the polynomial $-\sg_*   \lb - 2 \lb^2 = \wt Q'(w_+)= v_+- \mu$. This approach based on the concentration as a Dirac measure of $u_\eps$  differs (but is restricted to one dimension) from the general front propagation theory in \cite{BES90,BS94} based on the quantity $v_\eps$. 
\commentout{
\section{Energy} 

Equation \eqref{full:fund2} comes with an energy 
\[
E(t):= \int [\eps^2 |\nabla w_\eps|^2+  \big(Q(w^0_\eps(x)) + \eps u^0(x) +\eps^2 \Delta w^0_\eps \big) w_\eps- e^{w_\eps} + \mu \frac {w_\eps^2} 2] dx.
\] 
\red{The difficulty here is that it is not  finite  with the only information that $Q(w^0_\eps(x)) - Q(w_\eps(t,x)) \in L^1$. P-E: Should we just require $u^0_\eps$ to be smooth? That should be enough, together with assumption (9) to propagate $\eps$-dependent smoothness on $v_\eps$ and allow to write everything on the energy.}

We immediately compute 
\[
\frac {d E(t)}{dt}= - \frac 1 \eps \int \big[ - \eps^2 \Delta w_\eps  - \wt Q(w_\eps) + \wt Q(w^0_\eps) + \eps  u^0_\eps + \eps^2 \Delta w^0_\eps \big]^2 dx .
\]
The energy decay  tells us  that $E(t)$ is bounded and thus $\eps^2 \int |\nabla w_\eps|^2$ and also
\[
\int_0^T \int   \big[ - \eps^2 \Delta w_\eps  - \wt Q(w_\eps) + \wt Q(w^0_\eps) + \eps  w^0_\eps + \eps^2 \Delta w^0_\eps \big] w = 0(\eps).
\]
This is related to compactness of $w_\eps$. indeed, using Lemma~\ref{lm:wbd}, we find the estimate 
\[
\eps  \int_0^T \int |\nabla w_\eps|^2  \leq C.
\]
Then, being given $\Psi$, we compute
\[
\big( \int | \nabla \Psi (x,w)| dx \big)^2 \leq \int_0^T \int |\nabla w_\eps|^2  \int_0^T \int | \Psi _w(x,w)|^2 dx + C
\]
And the choice
\[
\Psi _w(x,w) = | Q(v^0_\eps)- Q(w)|^{1/2}  
\]
gives that  $\Psi (x,w_\eps)$ is compact and since $\Psi (x,w)|$ is an increasing function, concludes immediately that $w_\eps$ is compact. 
}
\section{A Sobolev estimate} 
\label{ap:sobolev}

We may use the bound \eqref{BVPhi} to obtain Sobolev regularity on $v_\eps$ by controlling
\begin{equation}
\sup_{|h|\leq 1} \int_{0}^T  \! \!  \int_{|x|\leq R} \frac{|v_\eps(x+h)-v_\eps(x)|}{|h|^{\theta}}\,dx\,dt,\label{normsobolev}
\end{equation}
for some appropriate value of $\theta$.

This requires to be a bit more precise on the set where the initial data $v_\eps$ crosses $\mu$. Specifically, we assume that there exists some constants $C>0$ and $\kappa>0$ such that for any $\delta>0$
  \begin{equation}
|\{x,\;|v_\eps^0(x)-\mu|\leq \delta\}|\leq C\,\delta^\kappa.\label{v=mu}
  \end{equation}

Observe that when $|h|\leq \eps^{1/(1-\theta)}$ then by the Lipschitz bound on $v_\eps$ (which follows immediately from that on $w_\eps$), then 
\[
 \int_{0}^T  \! \!  \int_{|x|\leq R} \frac{|v_\eps(x+h)-v_\eps(x)|}{|h|^{\theta}}\,dx\,dt\leq C_{T,R}\,\|\partial_x v_\eps\|_{L^\infty}\,|h|^{1-\theta}\ \leq C_R, 
 \]
 so that we can limit ourselves to $h\geq \eps^{1/(1-\theta)}$.
 
For some $\alpha >0$ which we later relate to $\theta$ and some constant $C$, denote
\[
\begin{split}
  &\Omega_-=\{(t,x)\in [0,\ T]\times B(0,R),\;w^0_{-,\eps}-|h|^{\alpha}\leq w_\eps(t,x)\leq \ln\mu-|h|^{\alpha/2}\},
  \\
  &\Omega_+=\{(t,x)\in [0,\ T]\times B(0,R),\;w_\eps(t,x)\geq \ln\mu+|h|^{\alpha/2}\},\\
  & \Omega_0=\{(t,x)\in [0,\ T]\times B(0,R),\; \ln\mu-|h|^{\alpha/2}\leq w_\eps(t,x)\leq \ln\mu+|h|^{\alpha/2} \ \text{or}\ w_\eps(t,x)\leq w^0_{-,\eps}-|h|^\alpha\}.
  \end{split}
\]
Observe that when $(t,x)\in \Omega_0$ then 
\[
\begin{split}
  &  \sup_{\eps^{1/(1-\theta)}\leq |h|\leq 1} \int_{(t,x)\in \Omega_0\ \text{or}\ (t,x+h)\in \Omega_0} \frac{|v_\eps(t,x+h)-v_\eps(t,x)|}{|h|^{\theta}}\,dx\,dt\\
  &\qquad\leq 2\,\sup_{\eps^{1/(1-\theta)}\leq |h|\leq 1}\frac{\|v_\eps\|}{|h|^\theta}\,\int_{(t,x)\in \Omega_0\ \text{or}\ (t,x+h)\in \Omega_0}dx dt\\
\end{split}
\]
We note that if $w_\eps(t,x)\leq w^0_{-,\eps}(x)-|h|^{\alpha}$ then
\[
|\wt Q(w_\eps)-\wt Q(w_\eps^0)|\geq |h|^\alpha/C.
\]
Similarly if $\ln\mu-|h|^{\alpha/2}\leq w_\eps(t,x)\leq \ln\mu+|h|^{\alpha/2}$ but 
$w_\eps^0(x)<\ln\mu-2\,|h|^{\alpha/2}$ or $w_\eps^0(x)>\ln\mu+2|h|^{\alpha/2}$, then we have again
\[
|\wt Q(w_\eps)-\wt Q(w_\eps^0)|\geq |h|^\alpha/C,
\]
as $\wt Q(w)$ has a minimum at $w=\ln\mu$ but is strictly convex.

Hence by \eqref{v=mu}
  \[\begin{split}
  |\Omega_0|&\leq \frac{C}{|h|^\alpha}\,\int_0^T  \! \!  \int_{B(0,R)} |\wt Q(w_\eps)-\wt Q(w_\eps^0)|\,dx\,dt+\{x,\;|w_\eps^0(x)-\ln \mu|\leq 2\,|h|^{\alpha/2}\}\\
  & \leq \frac{C}{|h|^\alpha}\,\int_0^T  \! \!  \int_{B(0,R)} |\wt Q(w_\eps)-\wt Q(w_\eps^0)|\,dx\,dt+C\,|h|^{\kappa\,\alpha/2}.
\end{split}
\]
We therefore obtain that
\[
\begin{split}
&\sup_{\eps^{1/(1-\theta)}\leq |h|\leq 1} \int_{(t,x)\in \Omega_0\ \text{or}\ (t,x+h)\in \Omega_0} \frac{|v_\eps(t,x+h)-v_\eps(t,x)|}{|h|^{\theta}}\,dx\,dt\\
&\qquad\leq \sup_{\eps^{1/(1-\theta)}\leq |h|\leq 1} \frac{C}{|h|^\theta}\,\frac{1}{|h|^\alpha}\,\int_0^T  \! \! \int_{B(0,R)} |\wt Q(w_\eps)-\wt Q(w_\eps^0)|\,dx\,dt+C\,\frac{|h|^{\kappa\,\alpha/2}}{|h|^\theta}\leq C_{T,R},
\end{split}
\]
by Lemma \ref{lm:wbd} and provided that $\kappa\,\alpha/2\geq \theta$ leading us to take $\alpha=2\,\theta/\kappa$ and $(\alpha+\theta)/(1-\theta)=(1+2/\kappa)\,\theta/(1-\theta)\leq 1$. It is always possible to satisfy these inequalities provided that $\theta/(1-\theta)\leq \frac{1}{1+2/\kappa}$.

We can consequently exclude the case where $(t,x)\in \Omega_0\ \text{or}\ (t,x+h)\in \Omega_0$ when bounding \eqref{normsobolev}.

The $BV$ bound~\eqref{BVPhi} also directly controls the case where $(t,x)\in \Omega_-\ \text{and}\ (t,x+h)\in \Omega_+$ (or vice-versa). Indeed in that case, we necessarily have that $w_{+,\eps}^0\geq w_{-,\eps}^0+2\,|h|^{\alpha/2}$ and $\partial_w \Phi(x,w_\eps)=\wt Q(w_\eps^0)-\wt Q(w_\eps)$ has a sign between $w_{-,\eps}$ and $w_{+,\eps}$ so that there exists a constant $C$ s.t. (again at least locally)
\[
|\Phi(x,w^0_{-,\eps})-\Phi(x,w^0_{+,\eps})|\geq \frac{|h|^{3\,\alpha/2}}{C}.
\]
By our definitions of $\Omega_-$ and $\Omega_+$ and taking $C$ large enough, this implies that
\[
|\Phi(x,w_\eps(t,x+h))-\Phi(x,w_\eps(t,x))|\geq \frac{|h|^{3\,\alpha/2}}{C}\quad\text{if}\ (t,x)\in \Omega_-\ \text{and}\ (t,x+h)\in \Omega_+.
\]
Therefore
\[
\begin{split}
&  \int_{(t,x)\in \Omega_-\ \text{and}\ (t,x+h)\in \Omega_+} \frac{|v_\eps(t,x+h)-v_\eps(t,x)|}{|h|^{\theta}}\,dx\,dt\leq 2\,\frac{\|v_\eps\|}{|h|^{\theta}}\,\int_{(t,x)\in \Omega_-\ \text{and}\ (t,x+h)\in \Omega_+}dx\\
  &\qquad \leq \frac{C}{|h|^{\theta+3\alpha/2}}\,\int_0^T  \! \!  \int_{B(0,R)} |\Phi(x,w_\eps(t,x+h))-\Phi(x,w_\eps(t,x))|\,dx\,dt\leq C_{T,R}\, |h|^{1-\theta-3\alpha/2},
\end{split}
\]
by~\eqref{BVPhi}. This is bounded as long as $\theta+3\alpha/2=\theta\,(1+3/\kappa)\leq 1$.

We are finally able to focus on the case where for example both $(t,x)\in \Omega_-$ and $(t,x+h)\in \Omega_-$. Note again that $\partial_w \Phi(x,w_\eps)=\wt Q(w_\eps^0)-\wt Q(w_\eps)$ vanishes once with a change of sign  at $w_\eps=w_{-,\eps}$ for $(t,x)\in \Omega_-$. Therefore $w\to \Phi(x,w)$ is injective on $w\in [w^0_{-,\eps},\; \ln \mu-|h|^{\alpha/2}]$ for some constant $C$ and 
\[
|w_1-w_2|^3\leq C\,|\Phi(x,w_1)-\Phi(x,w_2)|,\quad w_1,\,w_2\in [w^0_{-,\eps},\;\ln\mu-|h|^{\alpha/2}].  
\]
Since $w_\eps\geq w^0_{-,\eps}-|h|^\alpha$ on $\Omega_-$, this implies that for  both $(t,x)\in \Omega_-$ and $(t,x+h)\in \Omega_-$, we have
\[
|w_\eps(t,x)-w_\eps(t,x+h)|^3 \leq C\,|\Phi(x,w_\eps(t,x))-\Phi(x,w_\eps(t,x+h))|+C\,\eps^\alpha.  
\]
By a straightforward H\"older inequality, we get
\[
\begin{split}
  &\int_{(t,x)\in \Omega_-\ \text{and}\ (t,x+h)\in \Omega_+} \frac{|v_\eps(t,x+h)-v_\eps(t,x)|}{|h|^{\theta}}\,dx\,dt\\
  &\quad\leq C_{T,R}\,\left(\int_{(t,x)\in \Omega_-\ \text{and}\ (t,x+h)\in \Omega_-} \frac{|w_\eps(t,x+h)-w_\eps(t,x)|^3}{|h|^{3\theta}}\,dx\,dt\right)^{1/2}\\
 &\quad\leq C_{T,R}\,\left(\int_{(t,x)\in \Omega_-\ \text{and}\ (t,x+h)\in \Omega_-} \frac{|\Phi(x,w_\eps(t,x))-\Phi(x,w_\eps(t,x+h))|+|h|^\alpha}{|h|^{3\theta}}\,dx\,dt\right)^{1/2}\leq C_{T,R}, 
\end{split}
\]
by ~\eqref{BVPhi} again, provided that $3\,\theta\leq 1$ and $\alpha\geq 3\,\theta$. Since we chose $\alpha=2\,\theta/\kappa$, this last inequality holds provided that $\kappa\leq 2/3$, which we can always impose.

To summarize, we have obtained the desired bound~\eqref{normsobolev} provided that $\theta\leq 1/3$ and $\theta\leq 1/(1+3/\kappa)$ together with $\theta/(1-\theta)\leq 1/(1+2/\kappa)$.

\bibliographystyle{plain}
\bibliography{biblio}


\end{document}